\numberwithin{equation}{section}        
\newcommand{\bbC}{{\mathbb C}}           
\newcommand{\bbD}{{\mathbb D}}           
\newcommand{\bbR}{{\mathbb R}}           
\newcommand{\Co}{{\bbC}_*}			     
\newcommand{\zbar}{\bar{z}}
\theoremstyle{plain}
\newtheorem*{thm*}{Theorem}         
\newtheorem*{lma*}{Lemma}           
\newtheorem*{cor*}{Corollary}
\newtheorem{lemma}[equation]{Lemma}
\theoremstyle{remark}
\newtheorem{rmks}[equation]{Remarks}
\def\IR{{\mathbb R}}
\def\IC{{\mathbb C}}
\def\ID{{\mathbb D}}
\begin{document} 
\title{Subharmonic Functions, Conformal Metrics, and CAT(0)}
\date{\today}


\author{David A.\ Herron}
\address{Department of Mathematical Sciences, University of Cincinnati, OH 45221-0025, USA}
\email{David.Herron@UC.edu}

\author{Gaven J.\ Martin}
\address{Institute for Advanced Study, Massey University, New Zealand}
\email{G.J.Martin@Massey.ac.NZ}

\keywords{subharmonic, CAT(0), Hadamard, quasihyperbolic metric, universal cover}
\subjclass[2010]{Primary: 30F45, 30L99; Secondary:  51F99, 30C62, 20F65, 53A30, 53C23}

\begin{abstract}
We present an analytical proof that certain natural metric planar universal covers are Hadamard metric spaces.  In particular if $\rho=\varphi\circ u$ where $u$ is locally Lipschitz and subharmonic in $\Omega$, $\varphi$ is positive and increasing on an interval containing $u(\Omega)$ with $\log\varphi$ convex, and if the metric space $(\Omega,\rho(z)|dz|)$ is complete,  then it has universal cover
$(\tilde{\Omega},\tilde{d})$ which is a Hadamard space for which geodesics have Lipschitz continuous first derivatives.

\end{abstract}

\dedicatory{Dedicated to Pekka Koskela on the occasion of his $60^{\rm th}$ birthday.}

\thanks{This research was initiated at the Bedlewo Mathematical Research and Conference Center where both authors were supported by the Polish Academy of Sciences.  We thank them for their hospitality. GJM is supported in part by a grant from the NZ Marsden Fund.}

\maketitle

\section{Introduction}

In an earlier paper \cite{Kos98}, Pekka Koskela wrote about old and new results concerning the quasihyperbolic metric - a metric introduced in \cite{GP76}.  Since then this metric has been a key player in much of Pekka's work along with that of many other authors, either as a fundamental geometric tool or as the focus of research.  We anticipate that Pekka will find the following results of interest as they include quasihyperbolic geometry as a special case.

Throughout this article $\Omega$ denotes a plane domain in the complex number field $\IC$; so, $\Omega\subset\bbC$ is open and connected.  The \emph{Gaussian curvature} of a (sufficiently) smooth conformal metric $\rho\,ds$ (see \S\ref{s:cfml metrics}) is given by
\begin{equation}\label{E:cvtr}
  {\mathbf K}_\rho:=-\rho^{-2}\Delta\log\rho\,.
\end{equation}

Classical results---for instance, see \cite[Theorem~1A.6, p.173 and Chapter~II.4, Theorem~4.1, p.193]{BH99}---reveal that if $\rho$ is $\mathcal{C}^3$ smooth, $\log\rho$ is subharmonic in $\Omega$, and the length distance $d$ induced by $\rho\,ds$ is complete, then $(\Omega,d)$ is a metric space of non-positive curvature (equivalently, the universal metric cover $(\tilde{\Omega},\tilde{d})$ of $(\Omega,d)$ is Hadamard).

A natural question is whether or not we can relax the above smoothness hypothesis, and we answer this as follows.

\begin{thm*}  \label{TT:main}
Let $\rho\,ds$ be a conformal metric on a plane domain $\Omega$ with a complete induced length distance $d$,  see (\ref{metric}).  Suppose $\rho=\varphi\circ u$ where $u$ is continuous and subharmonic in $\Omega$, $\varphi$ is positive and increasing on an interval containing $u(\Omega)$, and $\log\varphi$ is convex.  Then the metric universal cover
$(\tilde{\Omega},\tilde{d})$ is a Hadamard space.
\end{thm*}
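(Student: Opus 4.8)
The plan is to first reduce the statement to the classical smooth criterion and then to remove the smoothness hypothesis by an approximation argument. Set $\psi:=\log\varphi$. Since $\varphi$ is positive and increasing, $\psi$ is nondecreasing, and by hypothesis $\psi$ is convex; being convex, $\psi$ is continuous on the interior of its interval of definition, so $\rho=\varphi\circ u=e^{\psi\circ u}$ is continuous. Moreover $\log\rho=\psi\circ u$ is the composition of the convex nondecreasing function $\psi$ with the subharmonic function $u$, and such a composition is again subharmonic. Thus the hypotheses on $\varphi$ and $u$ serve exactly to guarantee that $\rho$ is a continuous conformal factor with $\log\rho$ subharmonic, and the theorem reduces to proving: \emph{if $\rho\,ds$ is a continuous conformal metric on $\Omega$ with complete induced distance $d$ and $v:=\log\rho$ subharmonic, then $(\tilde\Omega,\tilde d)$ is Hadamard.} This is the cited classical theorem with its $\mathcal C^3$ smoothness weakened to continuity.

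Next I would reduce the global assertion to a local one. As $(\Omega,d)$ is a complete length space and the covering $\pi:\tilde\Omega\to\Omega$ carries the lifted length metric, $(\tilde\Omega,\tilde d)$ is again complete, and it is simply connected by construction. By the metric Cartan--Hadamard theorem (\cite[Chapter~II.4]{BH99}) a complete, simply connected, locally CAT(0) length space is globally CAT(0), hence Hadamard. It therefore suffices to show that $(\Omega,d)$ has nonpositive curvature, i.e.\ that every point has a neighborhood on which the Bruhat--Tits CN inequality
\[
  d(p,m)^2\le\tfrac12\,d(p,x)^2+\tfrac12\,d(p,y)^2-\tfrac14\,d(x,y)^2
\]
holds, where $m$ denotes the midpoint of $x$ and $y$.

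To establish this local inequality I would approximate $v$ from above by smooth subharmonic functions. Mollifying, $v_\epsilon:=v*\eta_\epsilon$ is smooth and subharmonic on the subdomain $\Omega_\epsilon:=\{z\in\Omega:\operatorname{dist}(z,\partial\Omega)>\epsilon\}$, with $v_\epsilon\ge v$ and $v_\epsilon\downarrow v$ as $\epsilon\downarrow0$. Put $\rho_\epsilon:=e^{v_\epsilon}$; then $\rho_\epsilon$ is smooth and $\mathbf K_{\rho_\epsilon}=-\rho_\epsilon^{-2}\Delta v_\epsilon\le0$, so by the classical result each $(\Omega_\epsilon,d_\epsilon)$ is locally CAT(0) and the CN inequality holds for $d_\epsilon$ on small balls. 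Since $\rho_\epsilon\ge\rho$ decreases to $\rho$, the induced distances satisfy $d_\epsilon\ge d$ and $d_\epsilon\downarrow d$ locally uniformly. Fixing $p$ and a ball $B=B(p,r)$ with $\overline{B}\subset\Omega$, I would choose $r$ so small that for all sufficiently small $\epsilon$ the comparison inequality is valid on $B$ for $d_\epsilon$, and then let $\epsilon\to0$: the quantities $d_\epsilon(\cdot,\cdot)$ converge to $d(\cdot,\cdot)$ and the $d_\epsilon$-midpoints converge to the $d$-midpoint, so the CN inequality passes to the limit and holds for $d$ on $B$.

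The main obstacle is this limit step, namely the stability of the comparison inequality under the convergence $d_\epsilon\downarrow d$. Two points need care. First, one must control the geometry uniformly in $\epsilon$: because $\mathbf K_{\rho_\epsilon}\le0$ the nonpositive-curvature neighborhoods can be taken of a size depending only on the ambient ball and not on $\epsilon$, which is what allows a single $B$ to work for all small $\epsilon$. Second, one must show that $d_\epsilon$-geodesics, and hence midpoints, between fixed endpoints converge to the corresponding $d$-geodesic; here uniqueness of geodesics in nonpositively curved spaces, together with an Arzel\`a--Ascoli argument applied to the uniformly converging length metrics, yields convergence of the comparison configurations. The reason an approximation is unavoidable is that for merely continuous subharmonic $v$ the Laplacian $\Delta v$ is only a nonnegative Radon measure, so the curvature bound $\mathbf K\le0$ holds only distributionally and cannot be used pointwise; mollification converts this into a genuine pointwise inequality to which the smooth theory applies, and the monotone convergence $\rho_\epsilon\downarrow\rho$ is precisely what transfers the conclusion back to $(\Omega,d)$.
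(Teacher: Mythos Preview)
Your approach differs genuinely from the paper's.  Both begin by noting that $v=\log\rho$ is subharmonic and mollifying to obtain smooth subharmonic $v_\varepsilon$, hence smooth metrics $\rho_\varepsilon\,ds$ with $\mathbf K_{\rho_\varepsilon}\le0$.  From there the routes diverge.  The paper works globally on universal covers: it observes that the approximating spaces $(\Omega_n,d_{\rho_n})$ need not be complete, so Cartan--Hadamard does not apply to them; it repairs this by replacing $\rho_n$ with $\sigma_n:=\rho_n\,(1+\varepsilon_n\lambda_n)$, where $\lambda_n$ is the hyperbolic density of $\Omega_n$ (a separate lemma that $\log(1+\rho)$ remains subharmonic whenever $\log\rho$ is, is proved precisely for this step); it then lifts to $\mathbb D$ via holomorphic covering maps $\Phi_n$, invokes Hejhal's theorem to obtain $\Phi_n\to\Phi$ locally uniformly, and concludes via preservation of CAT(0) under pointed Gromov--Hausdorff (equivalently, $4$-point) limits.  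Your route instead applies Cartan--Hadamard \emph{once}, to the limit space $(\Omega,d)$, which is complete by hypothesis; the incompleteness of the approximants never obstructs you, and you dispense with both the $\sigma_n$ device and Hejhal's theorem.  The price is that all the analytic work is concentrated in the local limit step on $\Omega$.

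That step is where your argument is thin.  To conclude that a small ball is CAT(0) via the CN inequality you must verify it for \emph{every} local $d$-geodesic midpoint, whereas your Arzel\`a--Ascoli argument produces only one $d$-geodesic between given endpoints (a subsequential limit of the $d_\varepsilon$-geodesics); appealing to ``uniqueness of geodesics in nonpositively curved spaces'' for the limit metric $d$ is exactly what you are trying to establish.  The gap is closable---for instance, any local $d$-geodesic is an almost-$d_\varepsilon$-geodesic and hence, by the CAT(0) property of $d_\varepsilon$, uniformly close to the unique $d_\varepsilon$-geodesic, which forces local $d$-uniqueness in the limit; alternatively one can bypass midpoints altogether and pass the purely metric CAT(0) $4$-point condition to the limit, then combine with local existence of midpoints---but some such argument must be supplied.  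The paper's global $4$-point-limit argument on the simply connected cover sidesteps this issue entirely, since that criterion makes no reference to geodesics or midpoints.
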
                   

Recall that a \emph{Hadamard space} is a complete CAT(0) metric space; see \S\ref{s:CAT0}.

\smallskip

When $\tilde{\Omega}\stackrel{\Phi}{\to}\Omega$ is a universal cover of $\Omega$, and $(\Omega,d)$ is a length space, there is a unique length distance $\tilde{d}$ on $\tilde{\Omega}$ such that $(\tilde{\Omega},\tilde{d})\stackrel{\Phi}{\to}(\Omega,d)$ is a local isometry and $\tilde{d}$ is given by
\begin{equation}\label{metric}
  \tilde{d}(a,b):=\inf\bigl\{\ell_\rho(\Phi\circ\gamma) \bigm| \gamma\;\text{a path in $\tilde{\Omega}$ with endpoints $a, b$}\bigl\}\,;
\end{equation}
see \cite[Prop.~3.25, p.42]{BH99} or \cite[p.80]{BBI01}.  We call $(\tilde{\Omega},\tilde{d})$ the \emph{metric universal cover}  of $(\Omega,d)$.

\smallskip

There are some immediate consequences of the above Theorem.

\begin{cor*}  \label{CC:sc}
Suppose $d$ is the length distance induced by a conformal metric $\rho\,ds$ on $\Omega$ satisfying the above hypotheses.
\begin{enumerate}[\rm(a), wide, labelwidth=!, labelindent=0pt]
  \item  If $\Omega$ is simply connected, then $(\Omega,d)$ is Hadamard and for each pair of points $a,b\in\Omega$ there is a unique $d$-geodesic with endpoints $a,b$.
  \item  For each pair of points $a,b$ in any $\Omega$, each homotopy class of paths in $\Omega$ with endpoints $a,b$ contains a unique $d$-geodesic.
  \item Should $\rho:\Omega\to\IR_+$ be locally Lipschitz,  these geodesics have Lipschitz continuous first derivatives.
\end{enumerate}
\end{cor*}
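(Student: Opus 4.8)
The plan is to reduce parts (a) and (b) to the Theorem together with standard facts about Hadamard spaces, isolating the regularity claim (c) as the genuinely analytic point. For (a), since $\Omega$ is simply connected the universal cover may be taken to be $\Omega$ itself with $\Phi=\mathrm{id}$, so that $(\tilde\Omega,\tilde d)=(\Omega,d)$ and the Theorem asserts directly that $(\Omega,d)$ is Hadamard. Uniqueness of geodesics between points is then the defining feature of $\mathrm{CAT}(0)$ spaces: every Hadamard space is uniquely geodesic \cite{BH99}.

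For (b), fix $a,b\in\Omega$ and a homotopy class $\mathcal C$ of paths from $a$ to $b$, and choose a lift $\tilde a\in\Phi^{-1}(a)$. The path-lifting property shows that $\mathcal C$ determines a single lift $\tilde b\in\Phi^{-1}(b)$, namely the common endpoint of the lifts starting at $\tilde a$ of the paths in $\mathcal C$. By the Theorem $(\tilde\Omega,\tilde d)$ is Hadamard, so there is a unique $\tilde d$-geodesic $\tilde\gamma$ from $\tilde a$ to $\tilde b$; put $\gamma:=\Phi\circ\tilde\gamma$. Since $\Phi$ is a local isometry it preserves lengths, so $\ell_\rho(\gamma)=\tilde d(\tilde a,\tilde b)$, while any $\sigma\in\mathcal C$ lifts to a path from $\tilde a$ to $\tilde b$ of equal length, giving $\ell_\rho(\sigma)\ge\tilde d(\tilde a,\tilde b)=\ell_\rho(\gamma)$; thus $\gamma$ is a $d$-geodesic in $\mathcal C$. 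Uniqueness transfers the same way: a second $d$-geodesic in $\mathcal C$ lifts to a path from $\tilde a$ to $\tilde b$ realizing $\tilde d(\tilde a,\tilde b)$, hence equals $\tilde\gamma$ by uniqueness upstairs, so its projection equals $\gamma$.

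For (c), now assume $\rho$ is locally Lipschitz. Because $\rho$ is continuous and positive, $d$ is locally \BL{} equivalent to the Euclidean distance, so a $d$-geodesic $\gamma$ is Euclidean-rectifiable and, parametrized by Euclidean arclength, is Lipschitz with $|\dot\gamma|=1$ a.e. The idea is to extract the first-variation (Euler--Lagrange) equation for the length functional $L(\gamma)=\int\rho(\gamma)\,|\dot\gamma|\,ds$: comparing $\gamma$ with compactly supported perturbations $\gamma+\varepsilon\eta$ and using that $\rho$ is differentiable a.e.\ (Rademacher), one obtains the integrated momentum identity
\begin{equation*}
  \rho(\gamma(s))\,\dot\gamma(s)=\rho(\gamma(s_0))\,\dot\gamma(s_0)+\int_{s_0}^{s}\nabla\rho(\gamma(\tau))\,d\tau .
\end{equation*}
The local Lipschitz hypothesis gives $\nabla\rho\in L^\infty_{\mathrm{loc}}$, so the right-hand side is Lipschitz in $s$; hence $\rho(\gamma)\,\dot\gamma$ is Lipschitz, and dividing by $\rho(\gamma)$ (Lipschitz and bounded below away from $0$ on compact arcs) shows $\dot\gamma$ is locally Lipschitz. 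Thus $\gamma\in C^{1,1}_{\mathrm{loc}}$, i.e.\ its first derivative is Lipschitz continuous.

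I expect the main obstacle to be the passage from ``minimizer'' to ``weak solution of the geodesic equation'' inside part (c). The delicate point is justifying the first-variation identity for a curve that is a priori only length-minimizing and Lipschitz: one must legitimately differentiate $L$ under the integral sign where $\rho$ is merely Lipschitz, so that $\nabla\rho$ exists only almost everywhere, and confirm that the arclength normalization $|\dot\gamma|=1$ keeps the integrand $\rho(\gamma)|\dot\gamma|$ away from its singular locus $\{\dot\gamma=0\}$ where the Euclidean norm fails to be differentiable. Once the momentum identity is secured the bootstrap to $C^{1,1}$ is immediate, so the entire analytic content of (c) lies in establishing that identity.
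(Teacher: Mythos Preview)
Your treatment of (a) and (b) is correct and is exactly what the paper intends: the authors present the Corollary as an ``immediate consequence'' of the Theorem and give no separate argument, so your use of $\tilde\Omega=\Omega$ in the simply connected case together with unique geodesics in Hadamard spaces, and the standard lifting argument for homotopy classes, is precisely the implied reasoning.

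For (c) the paper takes a different route from yours: it gives no argument at all and simply cites \cite[Theorem~2.12 \& Theorem~4.3]{Mar}. Your attempt at a self-contained variational proof is a genuine addition, and the momentum identity you write down is the correct target. However, the gap you yourself flag is real and not merely cosmetic. Rademacher gives $\nabla\rho$ only almost everywhere in $\Omega$, while the geodesic $\gamma$ has two-dimensional measure zero, so there is no a~priori reason $\nabla\rho(\gamma(\tau))$ is defined for a.e.\ $\tau$, and the one-parameter differentiation $\varepsilon\mapsto\rho(\gamma+\varepsilon\eta)$ at the specific value $\varepsilon=0$ is not justified by Lipschitz continuity alone. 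The argument in \cite{Mar} circumvents this by working with the integrated (DuBois--Reymond) form of the Euler equation and exploiting one-sided comparison with explicit competitors, rather than differentiating the functional directly. So your outline is morally right but, as written, stops short of a proof; to close it you would need either to reproduce that integrated first-variation argument or to invoke \cite{Mar} as the paper does.
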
                  

\noindent{\bf Examples.} When $\Omega\subsetneq\bbC$, $\rho_\alpha:=\delta^\alpha$, where $\alpha\in\bbR$ and $\delta(z)=\delta_\Omega(z):={\rm dist}\,(z,\partial \Omega )$ is the Euclidean distance from $z$ to the boundary of $\Omega$, defines a conformal metric $\rho_\alpha\,ds$ on $\Omega$.  Since $-\log\delta$ is subharmonic in $\Omega$,\footnote{%
For fixed $\zeta\in\partial\Omega$, $z\mapsto-\log|z-\zeta|$ is harmonic in $\bbC\setminus\{\zeta\}\supset\Omega$, so it has the mean value property in $\Omega$, whence $-\log\delta$ does too.}
$\log\rho_\alpha$ is subharmonic when $\alpha<0$; it also induces a complete distance for $\alpha\le-1$.  Thus $\set{\rho_\alpha\,ds | \alpha\le-1}$ is a class of metrics to which we can apply the above Theorem and Corollary.

For $\alpha:=-1$ we obtain the \emph{quasihyperbolic metric} $\delta^{-1}ds=\delta_\Omega^{-1}ds$, and this special case of our Theorem gives \cite[Theorem~A]{Her21a}.

Furthermore, if $\gamma\geq 1$, then locally
\[ |\delta^\gamma_\Omega(z)-\delta^\gamma_\Omega(w)| \leq 2\alpha \delta^{\gamma-1}_\Omega(z)\; |z-w| \]
by the triangle inequality.  Hence all these metrics have $C^{1,1}$ geodesics.

\section{Preliminaries}
%
\subsection{General Information}

We view the Euclidean plane as the complex number field $\IC$.  Everywhere $\Omega$ is a {\em plane domain}.
The open unit disk is $\ID:=\{z\in\IC:|z|<1\}$ and $\IC_*:=\IC\setminus\{0\}$.  The quantity $\delta(z)=\delta_\Omega(z):={\rm dist}\,(z,\partial\Omega)$ is the Euclidean distance from $z\in\IC$ to the boundary of $\Omega$.

\subsection{Conformal Metrics}      \label{s:cfml metrics} %
Each positive continuous function $\Omega\xrightarrow{\rho}(0,+\infty)$ induces a length distance $d_\rho$ on $\Omega$ defined by
\[
  d_\rho(a,b):=\inf_{\gamma } \ell_\rho(\gamma) \quad\text{where}\quad \ell_\rho(\gamma):= \int_\gamma \rho\, ds
\]
and where the infimum is taken over all rectifiable paths $\gamma$ in $\Omega$ that join the points $a,b$.  We describe this by calling  $\rho\,ds=\rho(z)|dz|$ a \emph{conformal metric} on $\Omega$.  When $(\Omega,d_\rho)$ is complete, the Hopf-Rinow Theorem (see \cite[p.35]{BH99}, \cite[p.51]{BBI01}, \cite[p.62]{Pap05}) asserts that $(\Omega,d)$ is a proper geodesic metric space.\footnote{%
See also \cite[Theorem~2.8]{Mar} for Euclidean domains.}

When $\rho$ is sufficiently differentiable,  say in $\mathcal{C}^2(\Omega)$, the \emph{Gaussian curvature} of $(\Omega,d_\rho)$ is given by \eqref{E:cvtr}
This curvature can also be defined for continuous metric densities through an integral formula for the Laplacian, although it is not always finite;  see \cite[\S3]{MO86}. This idea was first observed by Heins \cite{Heins} who proved a version of Schwarz' lemma when the weak curvature is bounded above by $-1$.

Every hyperbolic plane domain $\Omega$
carries a unique metric $\lambda\,ds=\lambda_\Omega\,ds$ which enjoys the property that its pullback $\Phi^*[\lambda\,ds]$, with respect to any holomorphic universal covering projection $\Phi:\bbD\to\Omega$, is the hyperbolic metric $\lambda_\bbD(\zeta)|d\zeta|=2(1-|\zeta|^2)^{-1}|d\zeta|$ on $\bbD$.
Alternatively, $\lambda\,ds$ is the unique maximal (or unique complete) metric on $\Omega$ that has constant Gaussian curvature $-1$.


The \emph{quasihyperbolic metric} on $\Omega\subsetneq\bbC$ is $\delta^{-1}ds$, and it is well-known that this metric is complete.\footnote{%
This is also true for rectifiably connected non-complete locally complete metric spaces as discussed in \cite[2.4.1, p.43]{HRS20}.}  In \cite[Corollary 3.6, p.44]{MO86} Martin and Osgood proved that in plane domains the quasihyperbolic metric has non-positive generalized curvature.

\subsection{CAT(0) Metric Spaces}      \label{s:CAT0} %
Here our terminology and notation conforms exactly with that in \cite{BH99} and we refer the reader to this delightful trove of geometric information about non-positive curvature, and also see \cite{BBI01}.  We recall a few fundamental concepts, mostly  copied directly from \cite{BH99}.  Throughout this subsection, $X$ is a geodesic metric space; for example, $X$ could be a quasihyperbolic plane domain with its quasihyperbolic distance, or a closed rectifiably connected plane set with its intrinsic length distance.

\subsubsection{Geodesic and Comparison Triangles}  \label{ss:triangles} 
A \emph{geodesic triangle} $\Delta$ in $X$ consists of three points in $X$, say $a,b,c\in X$, called the \emph{vertices of $\Delta$} and three geodesics, say $\alpha:a\curvearrowright b, \beta:b\curvearrowright c, \gamma:c\curvearrowright a$ (that we may write as $[a,b], [b,c], [c,a]$) called the \emph{sides of $\Delta$}.  We use the notation
\[
  \Delta=\Delta(\alpha,\beta,\gamma) \quad\text{or}\quad \Delta=[a,b,c]:=[a,b]\star[b,c]\star[c,a] \quad\text{or}\quad \Delta=\Delta(a,b,c)
\]
depending on the context and the need for accuracy.

A Euclidean triangle $\bar\Delta=\Delta(\bar{a},\bar{b},\bar{c})$ in $\IC$ is a \emph{comparison triangle} for $\Delta=\Delta(a,b,c)$ provided $|a-b|=|\bar{a}-\bar{b}|,|b-c|=|\bar{b}-\bar{c}|,|c-a|=|\bar{c}-\bar{a}|$.  We also write $\bar\Delta=\bar\Delta(a,b,c)$ when a specific choice of $\bar{a},\bar{b},\bar{c}$ is not required.  A point $\bar{x}\in[\bar{a},\bar{b}]$ is a \emph{comparison point} for $x\in[a,b]$ when $|x-a|=|\bar{x}-\bar{a}|$.

\subsubsection{CAT(0) Definition}  \label{ss:CAT0} 

A geodesic triangle $\Delta$ in $X$ satisfies the \emph{CAT(0) distance inequality} if and only if the distance between any two points of $\Delta$ is not larger than the Euclidean distance between the corresponding comparison points; that is,
\[
  \forall\; x,y\in\Delta\;\text{and corresponding comparison points}\;\bar{x},\bar{y}\in\bar\Delta\;, \quad |x-y| \le |\bar{x}-\bar{y}|\,.
\]
A geodesic metric space is \emph{CAT(0)} if and only if each of its geodesic triangles satisfies the CAT(0) distance inequality.

A complete CAT(0) metric space is called a \emph{Hadamard space}.  A geodesic metric space $X$ has \emph{non-positive curvature} if and only if it is locally CAT(0), meaning that for each point $a\in X$ there is an $r>0$ (that can depend on $a$) such that the metric ball $B(a;r)$ (endowed with the distance from $X$) is CAT(0).

Each sufficiently smooth Riemannian manifold has non-positive curvature if and only if all of its sectional curvatures are non-positive; see \cite[Theorem~1A.6, p.173]{BH99}.  In particular, if $\rho\,ds$ is a smooth conformal metric on $\Omega$ with ${\bf K}_\rho\le0$, then $(\Omega,d_\rho)$ has non-positive curvature.

\subsection{Some Potential Theory}      \label{ss:potl} %
Recalling \eqref{E:cvtr}, we see that when $\Omega\xrightarrow{\rho}(0,+\infty)$ is $\mathcal{C}^3$ smooth with $\log\rho$ subharmonic in $\Omega$, then $(\Omega,d_\rho)$ is a metric space of non-positive curvature; see \cite[Theorem~1A.6, p.173]{BH99}.  We utilize this basic fact, but also require the following.

\begin{lemma} \label{L:subhar} %
Suppose $\Omega\xrightarrow{\rho}(0,+\infty)$ is $\mathcal{C}^2$ smooth with $\log\rho$ subharmonic in $\Omega$.  Then $\log(1+\rho)$ is subharmonic in $\Omega$.
\end{lemma}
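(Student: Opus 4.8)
The plan is to use the $\mathcal{C}^2$ hypothesis to replace subharmonicity by the pointwise inequality $\Delta(\cdot)\ge0$, and then reduce everything to a direct computation of Laplacians via the chain rule. First I would record what the hypothesis on $\rho$ provides. Since $\rho\in\mathcal{C}^2(\Omega)$ is positive, writing $|\nabla\rho|^2=\rho_x^2+\rho_y^2$ we have
\[
  \Delta\log\rho = \frac{\rho\,\Delta\rho - |\nabla\rho|^2}{\rho^2}\,,
\]
so the subharmonicity of $\log\rho$ is precisely the inequality $\rho\,\Delta\rho\ge|\nabla\rho|^2$. I would then extract a second, easily overlooked consequence: because $\rho>0$ and $|\nabla\rho|^2\ge0$, this forces $\Delta\rho\ge0$, that is, $\rho$ itself is subharmonic.

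Next I would compute the target quantity the same way. With $g(t)=\log(1+t)$ one has $g'(t)=(1+t)^{-1}$ and $g''(t)=-(1+t)^{-2}$, so
\[
  \Delta\log(1+\rho) = g'(\rho)\,\Delta\rho + g''(\rho)\,|\nabla\rho|^2 = \frac{(1+\rho)\,\Delta\rho - |\nabla\rho|^2}{(1+\rho)^2}\,.
\]
It therefore suffices to show the numerator is non-negative. Splitting it as $(1+\rho)\Delta\rho - |\nabla\rho|^2 = \Delta\rho + \bigl(\rho\,\Delta\rho - |\nabla\rho|^2\bigr)$, I bound the parenthesized term below by $0$ using the recorded inequality $\rho\Delta\rho\ge|\nabla\rho|^2$, and the remaining term $\Delta\rho$ below by $0$ using the subharmonicity of $\rho$. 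Hence the numerator is $\ge\Delta\rho\ge0$, which gives $\Delta\log(1+\rho)\ge0$ and the claim.

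There is no serious obstacle here; the only real point is the observation that $\log\rho$ subharmonic delivers not merely $\rho\Delta\rho\ge|\nabla\rho|^2$ but also $\Delta\rho\ge0$, and it is exactly this extra slack, absorbed through the ``$+1$'' in $1+\rho$, that makes the numerator non-negative. If one preferred to avoid the chain-rule bookkeeping, a slicker alternative bypasses it entirely: the function $\psi(t)=\log(1+e^t)$ is increasing with $\psi''(t)=e^t(1+e^t)^{-2}>0$, hence convex and non-decreasing, so $\log(1+\rho)=\psi(\log\rho)$ is a convex increasing function of the subharmonic function $\log\rho$ and is therefore subharmonic. Since $\mathcal{C}^2$ regularity is assumed, however, the direct Laplacian computation is the most transparent and self-contained route.
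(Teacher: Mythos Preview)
Your proof is correct and essentially identical to the paper's: both derive $\Delta\rho\ge0$ and $\rho\,\Delta\rho-|\nabla\rho|^2\ge0$ from the hypothesis, then compute the Laplacian of $\log(1+\rho)$ and observe its numerator is the sum of these two non-negative quantities (the paper writes this with Wirtinger derivatives $\partial_z,\partial_{\bar z}$ rather than $\Delta,\nabla$, and obtains $\Delta\rho\ge0$ by noting $\rho=\exp(\log\rho)$ is a convex function of a subharmonic one, but the substance is the same). Your closing remark that $\psi(t)=\log(1+e^t)$ is convex and increasing gives a cleaner alternative that the paper does not mention.
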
 
\begin{proof}%
As $\rho=\exp(\log\rho)$ (with $\exp$ a convex function), it is also subharmonic in $\Omega$.  Thus
\begin{gather*}
  \rho_{z\zbar} \ge 0 \quad\text{and}\quad
  \rho^2 \frac{\partial^2}{\partial\zbar\partial z}\Bigl[\log\rho\Bigr] = \rho\rho_{z\zbar}-\rho_z\rho_{\zbar} \ge 0\,,
  \intertext{whence}
  \frac{\partial^2}{\partial\zbar\partial z}\biggl[\log(1+\rho)\biggr]= \frac{\partial}{\partial\zbar}\biggl[\frac{\rho_z}{1+\rho}\biggr] = \frac{(1+\rho)\rho_{z\zbar}-\rho_z\rho_{\zbar}}{(1+\rho)^2} \ge0\,.
\end{gather*}
\end{proof}%

\begin{rmks}\label{R:subhar}
We can apply the above:
\begin{enumerate}[\rm(a), wide, labelwidth=!, labelindent=20pt]
  \item  to $\varepsilon\rho$ for any $\varepsilon>0$;
  \item  to $\rho(z):=|z|^\alpha$ in $\Co$ for any $\alpha\in\bbR$;
  \item  whenever $\rho\,ds$ (is sufficiently smooth and) has curvature $\mathbf{K}_\rho\le0$.
\end{enumerate}
\end{rmks}


\subsubsection{Smoothing}      \label{ss:smooth} %
Let $\IC\xrightarrow{\eta}\IR$ be $\mathcal{C}^\infty$ smooth with $\eta\ge0$, $\eta(z)=\eta(|z|)$, the support of $\eta$ in $\ID$, and $\int_\IC\eta=1$.  For each $\varepsilon >0$ we set $\eta_\varepsilon (z):=\varepsilon ^{-2}\eta(z/\varepsilon )$.  The \emph{regularization} (or \emph{mollification}) of an $L^{1}_{\rm loc}(\Omega)$ function $u:\Omega\to\IR$ are the convolutions $u_\varepsilon :=u\ast\eta_\varepsilon $, so
\[
  u_\varepsilon (z) := \int_\IC u(w)\eta_\varepsilon (z-w)\, dA(w)\,,
\]
which are defined in $\Omega_\varepsilon :=\{z\in\Omega:\delta(z)>\varepsilon \}$.  It is well known that $u_\varepsilon \in\mathcal{C}^\infty(\Omega_\varepsilon )$ and $u_\varepsilon \to u$ as $\varepsilon \to0^+$ where this convergence is: pointwise at each Lebesgue point of $u$, locally uniformly in $\Omega$ if $u$ is continuous in $\Omega$, and in $L^p_{\rm loc}(\Omega)$ if $u\in L^p_{\rm loc}(\Omega)$.  Moreover, if $u$ is subharmonic in $\Omega$, then so is each $u_\varepsilon $.  See for example \cite[Proposition I.15, p.235]{GL86} or \cite[Theorem~2.7.2, p.49]{Ran95}.

\section{Proof of Theorem}  \label{S:PfThm} 
Let $\rho\,ds$ be a conformal metric on a plane domain $\Omega$ with an induced length distance $d:=d_\rho$ that is complete.  Suppose $\rho=\varphi\circ u$ where $u$ is subharmonic in $\Omega$, $\varphi$ is positive and increasing on an interval containing $u(\Omega)$, and $\log\varphi$ is convex.  We demonstrate that the metric universal cover
$(\tilde{\Omega},\tilde{d})$ is a Hadamard space.

The main idea is to approximate $(\Omega,d)$ by metric spaces $(\Omega_\varepsilon ,d_\varepsilon )$ that all have non-positive curvature.  Then a limit argument, similar to that used in the proof of \cite[Theorem~A]{Her21a}, gives the asserted conclusion.

We start with the fact that $v:=\log\rho=\log\varphi\circ u$ is subharmonic in $\Omega$.  See \cite[Theorem~2.6.3, p.43]{Ran95}.
Let $v_\varepsilon :=v\ast\eta_\varepsilon $ be the regularization of $v$ as described in \S\ref{ss:smooth}.  Thus $v_\varepsilon$ are defined, $\mathcal{C}^\infty$ smooth, and subharmonic (so, $\Delta v_\varepsilon \geq 0$) in $\{z\in\Omega:\delta(z)>\varepsilon \}$.  Moreover,  $v_\varepsilon \to v$ as $\varepsilon \to0^+$ locally uniformly in $\Omega$.

\medskip

The elementary cases where $\Omega=\bbC$ or $\Omega$ is the once punctured plane are left to the reader.    

\medskip

We may therefore assume that $\Omega$ is a hyperbolic plane domain and that the origin lies in $\Omega$.  Put $\varepsilon _n:=\delta(0)/n, v_n:=v_{\varepsilon _n}$, and let $\Omega_n$ be the component of $\{z\in\Omega:\delta(z)>\varepsilon _n\}$ that contains the origin.  Then $(\Omega_n)_{n=1}^{\infty}$ Carath\'eodory kernel converges to $\Omega$  with respect to the origin.

Next, let $\rho_n:=e^{v_n}$.  Then $\rho_n>0$ and is $\mathcal{C}^\infty$ in $\Omega_n$.  Since $v_n$ is subharmonic in $\Omega_n$, $\rho_n\,ds$ has Gaussian curvature
\[
{\bf K}_{\rho_n}=-\rho_n^{-2}\Delta\log\rho_n\le0\quad\text{in $\Omega_n$}\,.
\]
It follows that the metric spaces $(\Omega_n,d_{\rho_n})$ all have non-positive curvature; see \cite[Theorem~1A.6, p.173]{BH99}.

We would like to appeal to the Cartan-Hadamard theorem to assert that the metric universal coverings of $(\Omega_n,d_{\rho_n})$ are CAT(0), but these metric spaces need not be complete.\footnote{%
We are grateful to the referee for pointing out this glaring gap  in our original argument.}
To overcome this roadblock, we employ the hyperbolic metrics $\lambda_n\,ds$ in the domains $\Omega_n$.  We define metrics $\sigma_n\,ds$ on $\Omega_n$ via
\[
  \forall\; z\in\Omega\,,\quad \sigma_n(z):=\rho_n(z)\cdot\bigl(1+\varepsilon_n\lambda_n(z)\bigr)\,.
\]
According to Fact~\ref{L:subhar} (using Remarks~\ref{R:subhar}(a,c)) $\log\sigma_n$ is subharmonic in $\Omega$.  Thus $\sigma_n\,ds$ induces a distance $d_n:=d_{\sigma_n}$ with $(\Omega_n,d_n)$ a complete geodesic metric space of non-positive curvature.  We note that $\sigma_n\to\rho$ locally uniformly in $\Omega$.

Let $\bbD\xrightarrow{\Phi}\Omega, \bbD\xrightarrow{\Phi_n}\Omega_n$ be holomorphic covering projections with $\Phi(0)=0=\Phi_n(0)$ and $\Phi'(0)>0, \Phi'_n(0)>0$.  Since $(\Omega_n)$ Carath\'eodory kernel converges to $\Omega$ with respect to the origin, a theorem of Hejhal's \cite[Theorem~1]{Hej74}\footnote{%
See also \cite[Cor.~5.3]{BM22}.}
asserts that $\Phi_n\to \Phi$, so also $\Phi'_n\to \Phi'$, locally uniformly in $\bbD$.

Let $\tilde{d}, \tilde{d}_n$ be the $\Phi,\Phi_n$ lifts of the distances $d,d_n$ on $\Omega, \Omega_n$ respectively.  That is, $\tilde{d}$ and $\tilde{d}_n$ are the length distances on $\bbD$ induced by the pull backs
\begin{gather*}
  \tilde\rho\,ds:=\Phi^*\bigl[\rho\,ds] \quad\text{and}\quad  \tilde\sigma_n\,ds:=\Phi_n^*\bigl[\sigma_n\,ds]
  \intertext{of the metrics $\rho\,ds$ and $\sigma_n\,ds$ in $\Omega$ and $\Omega_n$ respectively.  Thus, for $\zeta\in\bbD$,}
  \tilde\rho(\zeta)\,|d\zeta|=\rho\bigl(\Phi(\zeta)\bigr)|\Phi'(\zeta)|\,|d\zeta|  \quad\text{and}\quad  \tilde\sigma_n(\zeta)\,|d\zeta|= \sigma_n\bigl(\Phi_n(\zeta)\bigr)|\Phi_n'(\zeta)| \,|d\zeta|
\end{gather*}
and $(\bbD,\tilde{d})\xrightarrow{\Phi}(\Omega,d), (\bbD,\tilde{d}_n)\xrightarrow{\Phi_n}(\Omega_n,d_n)$ are metric universal coverings.

Note that as $(\Omega_n,d_n)$ has non-positive curvature, the Cartan-Hadamard Theorem \cite[Chapter~II.4, Theorem~4.1, p.193]{BH99} asserts that $(\bbD,\tilde{d}_n)$ is CAT(0).

Using the locally uniform convergences of $\sigma_n\to\rho$ and $\Phi_n\to\Phi, \Phi'_n\to\Phi'$ (in $\Omega$ and $\bbD$ respectively) we deduce that $\tilde\sigma_n\,ds\to\tilde\rho\,ds$ locally uniformly in $\bbD$. This implies pointed Gromov-Hausdorff convergence of $(\bbD,\tilde{d}_n,0)$ to $(\bbD,\tilde{k},0)$ (see the proof of \cite[Theorem~4.4]{HRS20}) which in turn says that $(\bbD,\tilde{k})$ is a 4-point limit of $(\bbD,\tilde{d}_n)$ and hence, as each $(\bbD,\tilde{d}_n)$ is CAT(0), it follows that $(\bbD,\tilde{d})$ is CAT(0); see \cite[Cor.~3.10, p.187; Theorem~3.9, p.186]{BH99}.  Finally, it is a routine matter to check that $(\bbD,\tilde{d})$ is complete; for instance, see \cite[Exercise~3.4.8, p.~80]{BBI01}.
\hfill \qed

\medskip
Finally,  the smoothness of geodesics in case the metric density is locally Lipschitz is proved in \cite[Theorem 2.12 \& Theorem 4.3]{Mar}.


\end{document}